%journal: ARMA (commenting on \cite{LY2}). CPAM in honor of grad. ? KRM, ?
%%%%%%%%%%%%%%%%%%%%%%%%%%%%
\documentclass[11pt,leqno]{amsart}
\usepackage{amsthm,amsfonts,amssymb,amsmath,oldgerm}
\numberwithin{equation}{section}
\usepackage{fullpage}
\usepackage{color}
\usepackage{calrsfs}
\DeclareMathAlphabet{\pazocal}{OMS}{zplm}{m}{n}

%%%%%%%%%%%%%%%%%%%%%%%%%%% Mark W's settings

\setlength{\evensidemargin}{0in}
\setlength{\oddsidemargin}{0in}
\setlength{\textwidth}{6.5in}
\setlength{\topmargin}{0in}
\setlength{\textheight}{9.1in}

%%%%%%%%% Ben's

%%%%%%%%%%%

%%%%%%%%%%%%%%%%%%

%%%%%%%%%%%%%%%%%%%%%%%%%%%%%%%%%%%%%%%%%%%%%%%%

%\newcommand\tb{\tilde\beta}

\newcommand\R{\mathbb R}

%\newcommand\D{{${\mathcal D}_\eps$}}

%%%%%%%%%%%%%%%%%%%%%%%%%%%%%%5
\newcommand\kernel{\hbox{\rm Ker}}

\newcommand\br{\begin{remark}}
\newcommand\er{\end{remark}}
\newcommand\bp{\begin{pmatrix}}
\newcommand\ep{\end{pmatrix}}
\newcommand{\be}{\begin{equation}}
\newcommand{\ee}{\end{equation}}
\newcommand\ba{\begin{equation}\begin{aligned}}
\newcommand\ea{\end{aligned}\end{equation}}

%%%%%%%%%%%%%%%%%%%%%%%%%%%%%%

\newcommand{\bap}{\begin{app}}
\newcommand{\eap}{\end{app}}
\newcommand{\begs}{\begin{exams}}
\newcommand{\eegs}{\end{exams}}
\newcommand{\beg}{\begin{example}}
\newcommand{\eeg}{\end{exaplem}}
\newcommand{\bpr}{\begin{proposition}}
\newcommand{\epr}{\end{proposition}}
\newcommand{\bt}{\begin{theorem}}
\newcommand{\et}{\end{theorem}}
\newcommand{\bc}{\begin{corollary}}
\newcommand{\ec}{\end{corollary}}
\newcommand{\bl}{\begin{lemma}}
\newcommand{\el}{\end{lemma}}
\newcommand{\bd}{\begin{definition}}
\newcommand{\ed}{\end{definition}}
\newcommand{\brs}{\begin{remarks}}
\newcommand{\ers}{\end{remarks}}

%%%%%%%%%%%%%%%%%%%%%%%%%%%%%%%%%%%%%%%%%%%%%%%%%%
%\newtheorem{theo}{Theorem}[section]
%\newtheorem{prop}[theo]{Proposition}
%\newtheorem{cor}[theo]{Corollary}
%\newtheorem{lem}[theo]{Lemma}
%\newtheorem{defi}[theo]{Definition}
%\newtheorem{ass}[theo]{Assumption}
%\newtheorem{cond}[theo]{Condition}
%\newtheorem{exam}[theo]{Example}
%\newtheorem{rem}[theo]{Remark}
%\newtheorem{rems}[theo]{Remarks}
%\newtheorem{exams}[theo]{Examples}
%\newtheorem{result}[theo]{Result}
%\newtheorem{fact}[theo]{Fact}
%\numberwithin{equation}{section}
%%%%%%%%%%%%%%%%%%%%%%

%%%%%%%%%%%%%%%%%%%%%%%%%%%%%%%%%%%%%%%%%%%%%%%%%

%%%%%%%%%%%%%%%%%%%%%%%%%%%%%%%%%%%%%%%%%%%%%%%%%%
\newtheorem{theorem}{Theorem}[section]
\newtheorem{proposition}[theorem]{Proposition}
\newtheorem{corollary}[theorem]{Corollary}
\newtheorem{lemma}[theorem]{Lemma}

\theoremstyle{remark}
\newtheorem{remark}[theorem]{Remark}
\theoremstyle{definition}
\newtheorem{definition}[theorem]{Definition}

\newtheorem{example}[theorem]{Example}

%\numberwithin{equation}{section}
%%%%%%%%%%%%%%%%%%%%%%

\newcommand\cL{{\mathcal L}}

%%%%%%%%%%%%%%%% Mat's macros

%\def\baselinestretch{1.0}
%\def\baselinestretch{2.0}
%\pagestyle{headings}
%\newcommand{\li}{\frac{\sin(\pi H) \Gamma(2H + 1)}{(\pi i)^{2H+1}}}
%\newtheorem{thm}{Theorem}
%\newtheorem{corr}{Corollary}
%\newtheorem{df}{Definition}
%\newtheorem{assume}{Assumption}
%\newtheorem{examples}{Example}
%\newcommand{\C}{\nu_H}

%\newcommand{\MM}{\,\mbox{\bf M}}

%\newcommand{\sgn}{\operatorname{sgn}}

%%%%%%%%%%%%% fonts/sets %%%%%%%%%%%%%%%%

%%%%%%%%%%%%  abbreviations %%%%%%%%%%%%%%%

%\newcommand{\spec}{\text{\rm{spec}}}

%\newcommand{\dim}{\text{\rm{dim}}}

%\newcommand{\ess}{\text{\rm{ess}}}

%\newcommand{\dist}{\text{\rm{dist}}}

\newcommand{\beq}{\begin{equation}}
\newcommand{\eeq}{\end{equation}}
%\newcommand{\ba}{\begin{align}}
%\newcommand{\ea}{\end{align}}

%\newcommand{\tr}{\text{\rm{tr}}}

% Absolute value notation

%%%%%%%%%%%%%%%%%%%%%%%%%%%%%%%%%%%%%%%%%%%%%%%%%%%%%%
\title{$L^\infty$ resolvent bounds for steady Boltzmann's equation} 
%\title{Lower $L^\infty$ resolvent bounds for steady Boltzmann's equation} 
%\title{A lower $L^\infty$ resolvent bound for the steady Boltzmann equation} 
%%%%%%%%%%%%%%%%%%%%%%%%%%%%%%%%%%%%%%%%%%%%%%%%%%%%%
\author{Kevin Zumbrun}
\address{Indiana University, Bloomington, IN 47405}
\email{kzumbrun@indiana.edu}
\thanks{Research of K.Z. was partially supported under NSF grant no. DMS-0300487}

\begin{document}

\begin{abstract}
We derive lower bounds on the resolvent operator for the linearized steady Boltzmann equation over weighted $L^\infty$
Banach spaces in velocity, comparable to those derived by Pogan\&Zumbrun in an analogous weighted $L^2$ Hilbert space setting.
These show in particular that the operator norm of the resolvent kernel is unbounded in $L^p(\R)$ for all $1<p \leq \infty$,
resolving an apparent discrepancy in behavior between the two settings
%that was?
suggested by previous work.
\end{abstract}

\maketitle

\section{Introduction}\label{s:intro}
In this note, we derive an $L^\infty$ {\it lower} bound on the resolvent 
operator for the linearized steady Boltzmann equation,
in the process resolving a discrepancy between results of \cite{LY2} and \cite{PZ1}.
Let $L^\infty_{r,\xi}$ denote the space of functions $h(\xi)$, $\xi\in \R^3$, with finite norm 
$ \|h\|_{L^\infty_{r,\xi}}:=\sup_{\xi\in \R^3}(1+|\xi|)^r |h(\xi)|.  $
A key element in the study \cite{LY2} of existence of invariant manifolds for the steady Boltzmann equation 
with hard sphere potential is the resolvent estimate \cite[Thm. 11, (108)]{LY2}
\be\label{bound}
\|h\|_{L^\infty_{3,\xi}}(x) \leq C\int_\R e^{-\beta |x-y|}\|g\|_{L^\infty_{2,\xi}}(y)dy,
\quad \beta>0
\ee
for solutions of the linearized inhomogeneous equation with ``microscopic'' data, 
\be\label{inhom}
\xi_1 \partial_x h- L h= g, 
\qquad
g\in \kernel L^\perp,
\ee
where $L:=\bar M^{-1/2}\cL \bar M^{1/2}$,
with $\cL$ the linearized collision operator about a reference Maxwellian $ \bar M(\xi)=ce^{-|\xi-v|^2/d} $,
and $h :=(f-\bar M)/\bar M^{1/2}$,
with $f(x,\xi,t)$ 
the standard Boltzmann variable denoting density of particles of velocity $\xi\in \R^3$ at point 
$(x,t)\in \R^2$ in space and time \cite{G,LY2}.\footnote{
Here, $(\cdot)^\perp$ denotes orthogonal complement with respect to the usual, unweighted space 
$L^2\supset L^\infty_{2,\xi}$ in $\xi$.}

Estimate \eqref{bound} may be expressed equivalently as a bound
$
|R(x,\cdot, \cdot)|_{L^\infty_{2,\xi}\to L^\infty_{3,\xi}}\leq Ce^{-\beta |x|}$,
$\beta>0 $
on the resolvent kernel $R(x,\xi,\xi_*)$ for \eqref{inhom}, 
that is, the kernel of resolvent operator 
$$
\mathcal{R}=(\xi_1\partial_x-L)^{-1}|_{\kernel L^\perp},
\qquad
(\mathcal{R}g)(x,\xi)=\int_\R \int_{\R^3} R(x-y,\xi,\xi_*)g(y,\xi_*) d\xi_* dy.
$$
(See \cite{LY2} for a construction of $R$.) 
Either implies by standard convolution bounds uniform estimates
\be\label{uni}
\|h\|_{L^\infty(\R,L^\infty_{3,\xi})} \leq C\|g\|_{L^q(\R, L^\infty_{2,\xi})}, \qquad 1\leq q\leq \infty.
\ee
Taking $q=\infty$ in \eqref{uni},
and noting \cite{G,MZ1} that the associated bilinear collision operator $Q(h,h)$ of the full, nonlinear equation 
is bounded from $L^\infty_{3,\xi}\times L^\infty_{3,\xi} \to L^\infty_{2,\xi}$, 
yields $L^\infty(\R,L^\infty_{3,\xi})$-contractivity of the map $h\to \mathcal{R}Q(h,h)$,
giving the basis for the fixed-point iteration schemes defined in \cite{LY2}.

By comparison, in a nearby Hilbert-space setting,\footnote{
Precisely, the $L^2$ norm in $\xi$, after the rescaling $h\to \langle \xi\rangle^{1/2} h$, $g\to \langle \xi\rangle^{-1/2} g$, 
where $\langle \xi\rangle:=(1+|\xi|^2)^{1/2}$; see \cite{MZ1,PZ1}.  }
it has been shown \cite{PZ1} that the 
operator norm of the resolvent kernel is unbounded in all $L^p(\R)$, 
by explicit computation using self-adjointness of $L$ to diagonalize by unitary transformation.
This is in sharp contrast with \eqref{bound}, 
especially since finite-dimensional intuition
would suggest that optimal bounds be found in coordinates for which 
$L$ is unitarily diagonalizable.
Here, we resolve this apparent paradox by (i) {\it establishing the following result 
contradicting \eqref{uni},} hence \eqref{bound}, and 
(ii) {\it identifying a corresponding error in \cite{LY2}}.

\begin{proposition}\label{main}
The solution operator for \eqref{inhom} does not satisfy 
$ \|h\|_{L^\infty(\R,L^\infty_{3,\xi})} \leq C \|g\|_{L^q(\R,L^\infty_{2,\xi})} $ for any $q<\infty$, $C=C(q)$.
Likewise, $|R(x,\cdot,\cdot)|_{L^\infty_{2,\xi} \to L^\infty_{3,\xi}}$ is unbounded in $L^p(\R)$, $p>1$.
\end{proposition}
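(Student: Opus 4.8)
The strategy is to show that \eqref{uni} already fails for the principal, free-transport part of the dynamics, and that the collision term is a genuine but lower-order perturbation which cannot cancel this failure; the whole effect is concentrated in the \emph{grazing} regime $\xi_1\to 0$, $|\xi|\to\infty$. For the hard-sphere potential one has Grad's decomposition $L=-\nu(\xi)+K$, with collision frequency $\nu(\xi)\sim\langle\xi\rangle:=1+|\xi|$ and $K$ an integral operator obeying the gain estimate $K\colon L^\infty_{s,\xi}\to L^\infty_{s+1,\xi}$. The free solution operator $\cR_0:=(\xi_1\partial_x+\nu)^{-1}$ is diagonal in $\xi$ with explicit kernel: for $\xi_1>0$, $(\cR_0 g)(x,\xi)=\tfrac{1}{\xi_1}\int_{-\infty}^x e^{-\nu(\xi)(x-y)/\xi_1}g(y,\xi)\,dy$ (with the backward expression for $\xi_1<0$), so the relevant scalar $x$-kernel is $k_\xi(s)=\tfrac{1}{|\xi_1|}e^{-\nu(\xi)|s|/|\xi_1|}$.

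The core is a one-variable computation. A direct integration gives $\|k_\xi\|_{L^r_x}\sim|\xi_1|^{-1/r'}\nu(\xi)^{-1/r}$ for $1\le r\le\infty$. As a multiplier in $\xi$, $\cR_0$ carries $L^\infty_{2,\xi}$ to $L^\infty_{3,\xi}$ with operator norm $\sup_\xi\langle\xi\rangle\,|k_\xi|$, so: (i) by Young's inequality in $x$, the $L^q(\R,L^\infty_{2,\xi})\to L^\infty(\R,L^\infty_{3,\xi})$ norm of $\cR_0$ is comparable to $\sup_\xi\langle\xi\rangle\,\|k_\xi\|_{L^{q'}_x}\sim\sup_\xi(\langle\xi\rangle/|\xi_1|)^{1/q}$, using $\nu\sim\langle\xi\rangle$; and (ii) the $L^p_x$ norm of the fibered multiplier norm $|\cR_0(x)|_{L^\infty_{2,\xi}\to L^\infty_{3,\xi}}$ is bounded below, for each fixed $\xi$, by $\langle\xi\rangle\,\|k_\xi\|_{L^p_x}\sim(\langle\xi\rangle/|\xi_1|)^{1/p'}$. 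Both are finite only at the endpoints $q=\infty$, $p=1$; for $q<\infty$ and $p>1$ the exponents $1/q$ and $1/p'$ are positive, and evaluating along $\xi^{(n)}=(n^{-1},n,0)$, for which $\langle\xi^{(n)}\rangle/|\xi_1^{(n)}|\sim n^2$, forces divergence. This is exactly consistent with \eqref{uni}: the endpoint $q=\infty$ survives, which is why \eqref{bound} looks correct.

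To turn (i) into a disproof I construct extremizing data. Let $g^{(n)}(y,\xi)=\langle\xi^{(n)}\rangle^{-2}\phi_n(y)\chi(\xi-\xi^{(n)})$, with $\chi$ a fixed unit bump ($\chi(0)=1$) and $\phi_n$ the $L^q$-normalized Hölder extremizer of $k_{\xi^{(n)}}$ on $(-\infty,0]$. Then $\|g^{(n)}\|_{L^q(\R,L^\infty_{2,\xi})}\sim1$, while, because $\cR_0$ is diagonal, $\langle\xi^{(n)}\rangle^{3}\,|(\cR_0 g^{(n)})(0,\xi^{(n)})|\sim(\langle\xi^{(n)}\rangle/|\xi_1^{(n)}|)^{1/q}\to\infty$. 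The resolvent-kernel statement is obtained the same way, testing $|R(x,\cdot,\cdot)|_{L^\infty_{2,\xi}\to L^\infty_{3,\xi}}$ against the pure velocity bump $\langle\xi^{(n)}\rangle^{-2}\chi(\cdot-\xi^{(n)})$ and integrating the resulting $\xi^{(n)}$-component in $x$.

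The remaining, and main, obstacle is to transfer these bounds from $\cR_0$ to the actual solution operator $\cR$, i.e. to verify that $K$ cannot cancel the grazing singularity, while honoring $g\in\kernel L^\perp$. The constraint is harmless: since $\kernel L$ consists of Maxwellian-weighted polynomials, the projection of a packet centered at $|\xi^{(n)}|=n$ onto $\kernel L$ is $O(e^{-cn^2})$ in every weighted norm, so $g^{(n)}$ may be replaced by $(I-P)g^{(n)}$ with no change. For the perturbation, the true solution obeys the Duhamel identity $h=\cR_0 g+\cR_0 K h$, whence $\|h\|\ge\|\cR_0 g\|-\|\cR_0 K\|\,\|h\|$; and by the gain estimate $K\colon L^\infty_{3,\xi}\to L^\infty_{4,\xi}$ together with the (valid) endpoint bound $\cR_0\colon L^\infty(\R,L^\infty_{4,\xi})\to L^\infty(\R,L^\infty_{5,\xi})\hookrightarrow L^\infty(\R,L^\infty_{3,\xi})$, the operator $\cR_0 K$ is bounded on $L^\infty(\R,L^\infty_{3,\xi})$ \emph{uniformly}, with no grazing degeneracy (since $\|k_\xi\|_{L^1_x}=2/\nu$ is independent of $\xi_1$). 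Hence $\|\cR g^{(n)}\|_{L^\infty(\R,L^\infty_{3,\xi})}\gtrsim(\langle\xi^{(n)}\rangle/|\xi_1^{(n)}|)^{1/q}\to\infty$, giving the first assertion. The delicate point, and the place where I expect the sign/weight bookkeeping of \cite{LY2} to go wrong, is the corresponding uniform-in-grazing control of the $K$-correction for the $L^p_x$, $p>1$, kernel statement, where $\cR_0$ is no longer bounded in $x$ and the perturbative estimate must be carried out fiberwise near $\xi_1=0$.
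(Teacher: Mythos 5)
Your treatment of the \emph{first} assertion is correct and is essentially the paper's own proof: Grad's decomposition $L=-\nu+K$, the explicit free solution operator $\cR_0=(\xi_1\partial_x+\nu)^{-1}$ (the paper's $\mathcal{S}$), the rearranged Duhamel identity $\cR_0 g=h-\cR_0Kh$ with $\cR_0K$ bounded on $L^\infty(\R,L^\infty_{3,\xi})$, and grazing test data concentrated where $\langle\xi\rangle/|\xi_1|\to\infty$ (your $\xi^{(n)}=(n^{-1},n,0)$ versus the paper's $(\alpha,1,0)$, $\alpha\to 0$). Your handling of the constraint $g\in\kernel L^\perp$ is a genuine variant: the paper takes linear combinations of the infinite family $\{g_\alpha\}$ to get a contradiction on any finite-codimension subspace, while you exploit the $O(e^{-cn^2})$ overlap of a bump at $|\xi|=n$ with the Maxwellian-weighted kernel. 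That idea works, but ``may be replaced by $(I-P)g^{(n)}$ with no change'' needs one more line: smallness of $Pg^{(n)}$ in $L^q(\R,L^\infty_{2,\xi})$ buys you nothing, since the $L^q\to L^\infty$ operator norm of $\cR_0$ is exactly the quantity being shown infinite. You must instead control $\cR_0 Pg^{(n)}$ through a norm in which $\cR_0$ \emph{is} bounded, e.g. $\|Pg^{(n)}\|_{L^\infty(\R,L^\infty_{2,\xi})}\lesssim e^{-cn^2}\|\phi_n\|_{L^\infty}\lesssim e^{-cn^2}n^{2/q}\to 0$, using the valid endpoint ($q=\infty$) bound.

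The genuine gap is the \emph{second} assertion, which you leave unproven, proposing instead a ``delicate'' fiberwise control of the $K$-correction near $\xi_1=0$ in $L^p_x$. No such analysis is needed: the kernel statement is an immediate corollary of the first assertion. If $x\mapsto|R(x,\cdot,\cdot)|_{L^\infty_{2,\xi}\to L^\infty_{3,\xi}}$ belonged to $L^p(\R)$ for some $p>1$, then for $1/p+1/q=1$ (so $q<\infty$) Minkowski's and H\"older's inequalities would give
\begin{equation*}
\|(\cR g)(x)\|_{L^\infty_{3,\xi}}
\le \int_\R |R(x-y,\cdot,\cdot)|_{L^\infty_{2,\xi}\to L^\infty_{3,\xi}}\,\|g(y)\|_{L^\infty_{2,\xi}}\,dy
\le \bigl\| |R(\cdot,\cdot,\cdot)|\bigr\|_{L^p(\R)}\,\|g\|_{L^q(\R,L^\infty_{2,\xi})},
\end{equation*}
which is precisely the bound excluded by the first assertion. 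This is why the proposition says ``likewise'': the kernel statement is \emph{weaker} than failure of \eqref{uni} for $q<\infty$, a point the paper itself emphasizes in its closing discussion (unboundedness $L^q\to L^\infty$ is ``much stronger than'' non-membership of the kernel norm in $L^p$). Your direct route---testing the kernel of $\cR_0$ and then trying to transfer to $R$ fiberwise---would at best reprove the statement for $S$ and stalls exactly where you say it does; replacing it by the two-line duality argument above closes the proof.
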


\begin{proof}
Recall \cite{G} that the linearized collision operator $L$ appearing in \eqref{inhom}
may be decomposed as 
$ L=-{\nu}(\xi) + {K}, $
where $ \nu(\xi)\sim \langle\xi\rangle :=(1+|\xi|^2)^{1/2}$ and 
$ ({K}h)(\xi)=\int_{\R^3} k(\xi,\xi_*)h(\xi_*)d\xi_*, $
with kernel $ |k(\xi,\xi_*)|\leq C|\xi-\xi_*|^{-1}e^{-c|\xi-\xi_*|^2}.  $
By $\||\xi|^{-1}e^{-c|\xi|^2}\|_{L^1}<\infty$ and standard convolution bounds, 
${K}$ is bounded on $L^\infty(\xi)$.
Similarly, using the inequality $\langle \xi\rangle/\langle \xi-\xi_* \rangle\leq C\langle \xi_*\rangle$ and
$\||\xi|^{r-1}e^{-c|\xi|^2}\|_{L^1}<\infty$,
we obtain the standard result $|K|_{L^\infty_{r,\xi}}<+\infty$ \cite{G,LY2}.
Recall, further, that $\kernel L$ is finite-dimensional, tangent to the
5-dimensional manifold of Maxwellians \cite{G}.
Defining $\mathcal{S}=\big( \xi_1\partial_x + \nu(\xi) \big)^{-1}$, 
we have 
the explicit solution 
formula
$ (\mathcal{S}g)(x,\xi)= \int_{\R}S_\xi(x-y)g(y,\xi) dy, $
with $ S_\xi(\theta)=\xi_1^{-1}e^{-\nu(\xi)\xi_1^{-1}|\theta|}$ for  $ \theta \xi_1>0$, $0$ otherwise.
Computing $ \|S_\xi(\cdot)\|_{L^1(\R)}= 1/\nu(\xi)\leq C/ \langle \xi\rangle $,
we have by standard convolution bounds 
$|\mathcal{S}|_{L^\infty(\R, L^\infty_{2,\xi})\to  L^\infty(\R, L^\infty_{3,\xi})}<+\infty$.
%; in particular,
%$|\mathcal{S}|_{L^\infty(\R, L^\infty_{3,\xi})}<+\infty$.

Writing \eqref{inhom} as $\big( \xi_1 \partial_x + \nu(\xi) \big)h= {K}h + g$, applying 
$\mathcal{S}$, and rearranging, gives
$ \mathcal{S}g= h- \mathcal{S}{K}h, $
hence $ | \mathcal{S}g |_{L^\infty(\R,L^\infty_{3,\xi})}\leq C| h |_{L^\infty(3,L^\infty_{r,\xi})} $
by boundedness of $|K|_{L^\infty_{3,\xi}}$, $|\mathcal{S}|_{L^\infty(\R,L^\infty_{3,\xi})}$.
Thus, \eqref{uni} would imply a comparable bound 
$
|\mathcal{S}g|_{L^\infty(\R,L^\infty_{3,\xi})}\leq C |g|_{L^q(\R,L^\infty_{2,\xi})}
$
on $\mathcal{S}$.  This is easily seen to be false for $g \in L^\infty_{2,\xi}$, using test functions 
$ g_\alpha(x,\xi)=\langle \xi\rangle^{-2}\alpha^{-1/q}e^{-\alpha^{-1} |x|}$,
$\|g_\alpha\|_{L^q(\R,L^\infty_{2,\xi})}\equiv 1$, $\alpha\to 0 $, and computing
$ \mathcal{S}g_\alpha(0,(\alpha,1,0))=
(1+\alpha^2)^{-1}
\int_0^{+\infty} \alpha^{-1}e^{-(1+\alpha^2)^{1/2}\alpha^{-1}y} \alpha^{-1/q}e^{-\alpha^{-1} y}dy
\sim
\alpha^{-1/q} \to +\infty.  $
Using linear combinations of members $g_{\alpha_j}$
of the infinite-dimensional family $\{g_\alpha\}$,
we may obtain a contradiction also for $g$ ranging on any finite-codimension subspace,
in particular on $\kernel L^\perp$.  
\end{proof}

{\bf The argument of \cite{LY2}.}
In \cite{LY2}, the authors study the steady Boltzmann equation 
by a time-regularization scheme based on detailed pointwise bounds
on the time-evolutionary Green function $G(x,t,\xi,\xi_*)$, through the relation $R=\int_{\R^+}G \, dt$.
{However, this analysis contains a key error.
By 
%(67) \cite{LY2}
\cite[(67)]{LY2}:
$ S_xb=\int_{\R^+}\int_{\R^3} G(x,t,\cdot,\xi_*) \xi_{1,*} b(\xi_*) \, d\xi_* dt $ 
and the symmetric 
%(68) \cite{LY2},
\cite[(68)]{LY2},
%(52) \cite{LY2} 
\cite[(52)]{LY2} is equivalent to
	$ \|\int_{\R^+}\int_{\R^3} G(x,t,\cdot,\xi_*) \xi_{1,*} b(\xi_*)\, d\xi_* dt\|_{L^\infty_{3,\xi}} \leq Ce^{-\beta|x|}\|b\|_{L^\infty_{3,\xi}}.  $
In 
%(102) \cite{LY2}, 
\cite[(102)]{LY2}, 
this is used to estimate 
\be\label{102}
\|\int_{\R^+}\int_{\R^3} G(x,t,\cdot,\xi_*)  \tilde b(\xi_*) \, d\xi_* dt\|_{L^\infty_{3,\xi}} \leq Ce^{-\beta|x|}\|\tilde b\|_{L^\infty_{2,\xi}}, 
\ee
in effect asserting $\|\xi_1^{-1}\tilde b\|_{L^\infty_{3,\xi}} \lesssim \| \tilde b\|_{L^\infty_{2,\xi}}$,
or $|\xi_1|^{-1}\lesssim (1+ |\xi|)^{-1}$-- {evidently false} for $\xi_1$ small.
But, \eqref{102} is the basis for \eqref{bound},
the resolvent estimate underlying the fixed-point constructions of \cite{LY2}.

{\bf Discussion and open problems.}
Proposition \ref{main} shows that \cite[Thm. 11]{LY2} is incorrect, 
invalidating the conclusions of \cite{LY2} on existence of invariant manifolds.
Our proof of Proposition \ref{main} depends on the property that the principle part $\mathcal{S}$ of the resolvent $\mathcal{R}$
is unbounded from $L^q(x,L^\infty_{2,\xi})\to L^\infty (x,L^\infty_{3,\xi})$ for $q<\infty$, which is
much stronger than $|S(x,\cdot, \cdot)|_{L^\infty_{2,\xi}\to L^\infty_{3,\xi}}\not \in L^{p}$ for $1=1/p+1/q$.  
Estimate $|S(x,\cdot, \cdot)|_{L^\infty_{2,\xi}\to L^\infty_{3,\xi}}\sim |x|^{-1}$ as $x\to 0$ shows that
the latter holds also at the boundary $p=1$;
we conjecture that it holds for $R$ as well.
This, and the determination of upper bounds \eqref{uni}, $q=\infty$ on $\mathcal{R}$ we regard as very interesting open problems.


\begin{thebibliography}{GMWZ7}

\bibitem{G} H. Grad, 
	{\it Asymptotic theory of the Boltzmann equation. II,}
 1963  Rarefied Gas Dynamics (Proc. 3rd Internat. Sympos., Palais de l'UNESCO, Paris, 1962), Vol. I  pp. 26--59 Academic Press, New York.

\bibitem{LY2} T.-P. Liu and S.-H. Yu, \textit{Invariant Manifolds for Steady Boltzmann
	Flows and Applications}, Arch. Rational Mech. Anal. 209 (2013) 869--997.

%\bibitem{LY3} T.-P. Liu and S.-H. Yu, \textit{The Green's function and large-time behavior of solutions for onedimensional
	%Boltzmann equation.}  Commun. Pure Appl. Math. 57, 1543-1608 (2004).

\bibitem{MZ1} G. M\'etivier and K. Zumbrun, 
{\it Existence and sharp localization in velocity
of small-amplitude Boltzmann shocks,}
 Kinet. Relat. Models  2  (2009),  no. 4, 667--705.

\bibitem{PZ1} A. Pogan and K. Zumbrun,
{\it Stable manifolds for a class of degenerate evolution equations and 
exponential decay of kinetic shocks,} preprint, 39 pp.; arXiv:1607.03028. 
%TODO: hopefully update!

%include? NO, doesn't really fit...
%\bibitem{PZ2} A.\ Pogan and K.\ Zumbrun,
%\textit{Center manifolds of degenerate evolution equations and existence of small-amplitude kinetic shocks,}
%preprint; arxiv: 1612.05676.

\end{thebibliography}
\end{document}